\newtheorem{theorem}{Theorem}
\newtheorem{proposition}[theorem]{Proposition}
\newtheorem{corollary}[theorem]{Corollary}
\newtheorem{conjecture}[theorem]{Conjecture}
\theoremstyle{definition}
\newtheorem{example}[theorem]{Example}
\definecolor{webgreen}{rgb}{0,.5,0}
\definecolor{webbrown}{rgb}{.6,0,0}
\newcommand{\seqnum}[1]{\href{http://www.research.att.com/cgi-bin/access.cgi/as/~njas/sequences/eisA.cgi?Anum=#1}{\underline{#1}}}
\begin{document}

\begin{center}
\vskip 1cm{\LARGE\bf  Invariant number triangles, eigentriangles and Somos-$4$ sequences}  \vskip 1cm
\large
Paul Barry\\
School of Science\\
Waterford Institute of Technology\\
Ireland\\
\href{mailto:pbarry@wit.ie}{\tt pbarry@wit.ie} \\
\end{center}
\vskip .2 in

\begin{abstract} Using the language of Riordan arrays, we look at two related iterative processes on matrices and determine
which matrices are invariant under these processes. In a special case, the invariant sequences that arise are conjectured to have Hankel transforms that obey Somos-$4$ recurrences. A notion of eigentriangle for a number triangle emerges and examples are given, including a construction of the Takeuchi numbers.
\end{abstract}

\section{Introduction}
In this note, we shall define  transformations on invertible lower-triangular matrices involving the down-shifting of elements and taking an inverse. The invariant matrices for these transformations turn out to be simple Riordan arrays \cite{SGWW}, with generating functions easily described by continued fractions \cite{CF, Wall}. These matrices have close links to the Catalan numbers $C_n=\frac{1}{n+1}\binom{2n}{n}$. In the case of a particular two-parameter transformation, special sequences defined by this process appear to have Hankel transforms that satisfy Somos-$4$ type recurrences \cite{Somos}. Again using Riordan arrays we can characterize these sequences.

We recall that the \emph{Riordan group} \cite{SGWW, Spru}, is a set of
infinite lower-triangular integer matrices, where each matrix is
defined by a pair of generating functions
$g(x)=1+g_1x+g_2x^2+\ldots$ and $f(x)=f_1x+f_2x^2+\ldots$ where
$f_1\ne 0$ \cite{Spru}. The associated matrix is the matrix whose
$i$-th column is generated by $g(x)f(x)^i$ (the first column being
indexed by 0). The matrix corresponding to the pair $f, g$ is
denoted by $(g, f)$. The group law is then given
by
\begin{displaymath} (g, f)\cdot (h, l)=(g(h\circ f), l\circ
f).\end{displaymath} The identity for this law is $I=(1,x)$ and the
inverse of $(g, f)$ is $(g, f)^{-1}=(1/(g\circ \bar{f}), \bar{f})$
where $\bar{f}$ is the compositional inverse of $f$. This is also called the (series) reversion of $f$.
A Riordan array of the form $(g(x),x)$, where $g(x)$ is the
generating function of the sequence $a_n$, is called the
\emph{sequence array} of the sequence $a_n$. Its general term is
$a_{n-k}$, or more accurately $[k \le n]a_{n-k}$ (where $[P]$ is the Iverson bracket \cite{Concrete},
defined by $[\mathcal{P}]=1$ if the proposition $\mathcal{P}$
is true, and
$[\mathcal{P}]=0$ if $\mathcal{P}$ is false). Such arrays are also called \emph{Appell} arrays as they form the elements of the so called
Appell subgroup.
\newline\newline If $\mathbf{M}$ is the matrix $(g,f)$, and
$\mathbf{a}=(a_0,a_1,\ldots)'$ is an integer sequence with ordinary
generating function $\cal{A}$ $(x)$, then the sequence
$\mathbf{M}\mathbf{a}$ has ordinary generating function
$g(x)$$\cal{A}$$(f(x))$. The (infinite) matrix $(g,f)$ can thus be considered to act on the ring of
integer sequences $\mathbf{Z}^\mathbf{N}$ by multiplication, where a sequence is regarded as a
(infinite) column vector. We can extend this action to the ring of power series
$\mathbf{Z}[[x]]$ by
$$(g,f):\cal{A}(\mathnormal{x}) \longrightarrow \mathnormal{(g,f)}\cdot
\cal{A}\mathnormal{(x)=g(x)}\cal{A}\mathnormal{(f(x))}.$$
\begin{example} The binomial matrix $\mathbf{B}$ is the element
$(\frac{1}{1-x},\frac{x}{1-x})$ of the Riordan group. It has general
element $\binom{n}{k}$. More generally, $\mathbf{B}^m$ is the
element $(\frac{1}{1-m x},\frac{x}{1-mx})$ of the Riordan group,
with general term $\binom{n}{k}m^{n-k}$. It is easy to show that the
inverse $\mathbf{B}^{-m}$ of $\mathbf{B}^m$ is given by
$(\frac{1}{1+mx},\frac{x}{1+mx})$.
\end{example}
\noindent In the sequel, we shall assume that all matrices and sequences are integer valued.
\section{The $(a,b)$-Process}
We start by defining an operation on
lower-triangular matrices which have $1$'s on the diagonal.
Thus let $M$ be of the form
\begin{equation}\label{M} M=\left(\begin{array}{ccccccc} 1 & 0 &
0
& 0 & 0 & 0 & \ldots \\m_{2,1} & 1 & 0 & 0 & 0 & 0 & \ldots \\ m_{3,1} & m_{3,2}
& 1 & 0 & 0 &
0 & \ldots \\ m_{4,1} & m_{4,2} & m_{4,3} & 1 & 0 & 0 & \ldots \\ m_{5,1} & m_{5,2} & m_{5,3}
& m_{5,4} & 1 & 0 & \ldots \\m_{6,1} & m_{6,2} & m_{6,3} & m_{6,4} & m_{6,5} & 1
&\ldots\\
\vdots &
\vdots & \vdots & \vdots & \vdots & \vdots &
\ddots\end{array}\right).\end{equation}
Now form the matrix
\begin{equation}\label{M1} \tilde{M}(a,b)=\left(\begin{array}{ccccccc} 1 & 0 &
0
& 0 & 0 & 0 & \ldots \\-a & 1 & 0 & 0 & 0 & 0 & \ldots \\ -b & -a
& 1 & 0 & 0 &
0 & \ldots \\ -m_{2,1} & -b & -a & 1 & 0 & 0 & \ldots \\ -m_{3,1} & -m_{3,2} & -b
& -a & 1 & 0 & \ldots \\-m_{4,1} & -m_{4,2} & -m_{4,3} & -b & -a & 1
&\ldots\\
\vdots &
\vdots & \vdots & \vdots & \vdots & \vdots &
\ddots\end{array}\right).\end{equation}
Then we take the inverse $\tilde{M}(a,b)^{-1}$ of this matrix. Let us call this process the $(a,b)$-process.
We have the following proposition.
\begin{proposition} Let $f(x)$ be the power series defined by
\begin{equation} \label{eq}f(x)=\frac{1}{1-ax-(b-1)x^2-x^2f(x)}.\end{equation}
Then the Riordan array $$(f(x),x)$$ is invariant under the $(a,b)$-operation.
\end{proposition}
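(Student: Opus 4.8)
The plan is to verify invariance directly: take $M=(f(x),x)$ with $f$ as in \eqref{eq}, build the matrix $\tilde M(a,b)$, recognize it as a Riordan array, and invert it using the Appell-subgroup inversion rule, obtaining $M$ back.

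First, note that \eqref{eq} forces $f(0)=1$, so writing $f(x)=\sum_{n\ge 0}f_nx^n$ we have $f_0=1$ and $M=(f(x),x)$ is the sequence array of $(f_n)$: indexing rows and columns from $1$ as in \eqref{M}, its $(i,j)$ entry is $[\,j\le i\,]f_{i-j}$, so $m_{i,j}=f_{i-j}$ and $M$ is lower-triangular Toeplitz with unit diagonal. In particular $M$ is invertible, and since $\tilde M(a,b)$ also has unit diagonal, the $(a,b)$-process is well-defined on $M$.

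The crux is to identify $\tilde M(a,b)$ for this $M$. Reading off \eqref{M1} with $m_{i,j}=f_{i-j}$, the entry of $\tilde M(a,b)$ on the diagonal $i-j=d$ is $1$ when $d=0$, $-a$ when $d=1$, $-b$ when $d=2$, and $-m_{i-2,j}=-f_{(i-2)-j}=-f_{d-2}$ when $d\ge 3$. Thus $\tilde M(a,b)$ is again lower-triangular Toeplitz, hence of the form $(h(x),x)$ where $h$ is generated by its first column:
\[ h(x)=1-ax-bx^2-\sum_{d\ge 3}f_{d-2}x^d=1-ax-bx^2-x^2\bigl(f(x)-1\bigr)=1-ax-(b-1)x^2-x^2f(x). \]
By \eqref{eq} this is precisely $1/f(x)$, so $\tilde M(a,b)=\bigl(1/f(x),x\bigr)$. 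Finally, in the Appell subgroup $(g(x),x)^{-1}=(1/g(x),x)$, whence $\tilde M(a,b)^{-1}=(f(x),x)=M$; that is, $M$ is invariant under the $(a,b)$-process.

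I expect the only genuinely fiddly point to be the index bookkeeping in the third paragraph: translating the recipe of \eqref{M1} (overwrite the first two subdiagonals by $-a$ and $-b$, and fill the rest with the body of $M$ shifted down by two rows) into the shift $m_{i-2,j}=f_{d-2}$ and then into the generating function $h(x)$. Everything else is immediate from the Riordan formalism, and no property of $f$ is used beyond the defining equation \eqref{eq} itself.
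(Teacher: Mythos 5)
Your proof is correct and follows essentially the same route as the paper: you identify $\tilde M(a,b)$ as the Appell array $(1-ax-(b-1)x^2-x^2f(x),x)=(1/f(x),x)$ by matching its (Toeplitz) columns with the shifted coefficients of $f$, and then invert within the Appell subgroup using the defining equation \eqref{eq}. Your explicit diagonal bookkeeping is just a more detailed version of the paper's power-series manipulation, so there is nothing substantively different to flag.
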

\begin{proof}
By equation (\ref{eq}), we see that $f(x)=\sum_{i=0}^{\infty} {a_i x^i}$ where $a_0=1$. Then
$$x^2 f(x)=x^2 a_0+x^3 \sum_{i=0}a_{i+1}x^i=x^2+x^3 \sum_{i=0}a_{i+1}x^i.$$
We obtain
$$1-ax-(b-1)x^2-x^2 f(x)=1-ax-bx^2+x^2-x^2-x^3\sum_{i=0}a_{i+1}x^i=1-ax-bx^2-x^3\sum_{i=0}a_{i+1}x^i.$$
Thus we wish to prove that
$$(f(x),x)=(1-ax-(b-1)x^2-x^2 f(x),x)^{-1},$$ or equivalently that
$$(f(x),x)^{-1}=(1-ax-(b-1)x^2-x^2 f(x),x).$$
Now $$(f(x),x)^{-1}=\left(\frac{1}{f(x)},x\right)$$ and hence we wish to establish that
$$\frac{1}{f(x)}=1-ax-(b-1)x^2-x^2 f(x).$$ But this follows immediately from the definition of $f$.
\end{proof}

Let $a_n$ denote the $n$-th element of the first column of $(f(x),x)$. Then the $(n,k)$-th element of
$(f(x),x)$ is given by
$$[k \le n] a_{n-k}.$$ Thus we need only a knowledge of $a_n$ to describe all elements of the matrix.
\begin{proposition} Let $$g(x)=\frac{1}{1-ax-bx^2-x^2 g(x)}. $$
Then
$$[x^n]g(x)=\sum_{k=0}^{\lfloor \frac{n}{2} \rfloor}\binom{n-k}{k}b^k \sum_{j=0}^{n-2k}\binom{n-2k}{j}a^{n-2k-j}C_{\frac{j}{2}}\frac{1+(-1)^j}{2},$$
where $C_n=\frac{1}{n+1}\binom{2n}{n}$ is the $n$-th Catalan number \seqnum{A000108}.
\end{proposition}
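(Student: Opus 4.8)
The plan is to recognise the double sum on the right-hand side as the effect of applying two elementary Riordan-array transforms to the sequence of Catalan numbers supported on the even indices, and then to check that the resulting power series satisfies the functional equation defining $g$. The building block is $h(x)=\sum_{n\ge0}C_nx^{2n}=c(x^2)$, where $c(x)=\sum_nC_nx^n$ is the Catalan generating function, so that $c(x)=1+xc(x)^2$ and hence $h(x)=1/(1-x^2h(x))$. The reason $h$ appears is that the inner factor $C_{j/2}\frac{1+(-1)^j}{2}$ is precisely $[x^j]h(x)$: it equals $C_{j/2}$ when $j$ is even and $0$ when $j$ is odd.

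Next I would rewrite both nested sums as coefficient extractions. From $\sum_{m\ge j}\binom{m}{j}a^{m-j}x^m=\frac{x^j}{(1-ax)^{j+1}}$ one gets that the inner sum $\sum_{j=0}^{m}\binom{m}{j}a^{m-j}[x^j]h(x)$ equals $[x^m]D(x)$, where $D(x):=\frac{1}{1-ax}\,h\!\left(\frac{x}{1-ax}\right)$; that is, $D$ is the image of $h$ under the generalised binomial array $\mathbf B^a=\left(\frac{1}{1-ax},\frac{x}{1-ax}\right)$ of the Example. Setting $m=n-2k$ and using $\sum_{k\ge0}\binom{m+k}{k}(bx^2)^k=(1-bx^2)^{-m-1}$, the full double sum becomes $[x^n]G(x)$, where $G(x):=\frac{1}{1-bx^2}\,D\!\left(\frac{x}{1-bx^2}\right)$ is the image of $D$ under the Riordan array $\left(\frac{1}{1-bx^2},\frac{x}{1-bx^2}\right)$. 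So it remains to prove $G=g$.

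For that I would propagate the functional equation through each substitution. With $u=\frac{x}{1-ax}$ one has $x=\frac{u}{1+au}$, $\frac{1}{1-ax}=1+au$, and $x(1+au)=u$; hence $D=(1+au)h(u)$, and using $h(u)-u^2h(u)^2=1$ a one-line computation gives $D-axD-x^2D^2=h(u)-u^2h(u)^2=1$, i.e. $D=\frac{1}{1-ax-x^2D}$. Then with $v=\frac{x}{1-bx^2}$ one has $\frac{1}{1-bx^2}=\frac{v}{x}$, so $G=\frac{v}{x}D(v)$, whence $xG=vD(v)$ and $\frac{x}{v}=1-bx^2$; substituting $D(v)=xG/v$ into $D(v)-avD(v)-v^2D(v)^2=1$ and simplifying yields $G-bx^2G-axG-x^2G^2=1$, which is exactly $G=\frac{1}{1-ax-bx^2-x^2G}$. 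Since this equation has a unique formal power series solution (its coefficients are determined recursively, just as in the proof of the first Proposition above), $G=g$ and the stated formula follows.

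The main obstacle I expect is bookkeeping rather than anything conceptual: correctly identifying the two sums as successive Riordan-array actions — in particular pinning down the substitutions $x\mapsto x/(1-ax)$ and $x\mapsto x/(1-bx^2)$ together with their prefactors — and then carrying the substitution algebra in the last step through cleanly. A combinatorial alternative, interpreting $g$ via weighted Motzkin-type lattice paths (a level step of weight $a$, a double level step of weight $b$, and an up-step/sub-path/down-step combination of weight $1$), is also available but would be considerably longer.
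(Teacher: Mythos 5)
Your proof is correct and follows essentially the same route as the paper: both identify the double sum as the action of the composite Riordan array $\left(\frac{1}{1-bx^2},\frac{x}{1-bx^2}\right)\cdot\left(\frac{1}{1-ax},\frac{x}{1-ax}\right)$ on the aerated Catalan generating function $c(x^2)$. The only difference is in how the identification with $g$ is certified — the paper solves the quadratic for $g_{a,b}(x)$ and states the corresponding Riordan array factorization, while you propagate the functional equation $h=1+x^2h^2$ through the two substitutions and appeal to uniqueness of the formal power series solution, which in fact fills in details the paper leaves implicit.
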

\begin{proof}
Solving the equation
$$g(x)=\frac{1}{1-ax-bx^2-x^2g(x)}$$ gives us
$$g(x)=g_{a,b}(x)=\frac{1-ax-bx^2-\sqrt{1-2ax+(a^2-2b-4)x^2+2abx^3+b^2x^4}}{2x^2}.$$ With this value, we then have the Riordan array factorization
\begin{eqnarray*}(g_{a,b}(x),x)&=&\left(\frac{1}{1-ax-bx^2},\frac{x}{1-ax-bx^2}\right)\cdot \left(c(x^2), \frac{g_{a,b}(x)}{x}\right)\\
&=&\left(\frac{1}{1-bx^2},\frac{x}{1-bx^2}\right)\cdot \left(\frac{1}{1-ax},\frac{x}{1-ax}\right) \cdot \left(c(x^2),\frac{g_{a,b}(x)}{x}\right),\end{eqnarray*}
where $$c(x)=\frac{1-\sqrt{1-4x}}{2x}$$ is the g.f. of the Catalan numbers, and $c(x^2)$ is the g.f. of the aerated
Catalan numbers $1,0,1,0,2,0,5,0,\ldots$.  Thus
$$[x^n]g(x)=[x^n]\left(\frac{1}{1-bx^2},\frac{x}{1-bx^2}\right)\cdot \left(\frac{1}{1-ax},\frac{x}{1-ax}\right) \cdot c(x^2).$$ The result follows from this.

\end{proof}

\begin{corollary}
$$a_n=\sum_{k=0}^{\lfloor \frac{n}{2} \rfloor}\binom{n-k}{k}(b-1)^k \sum_{j=0}^{n-2k}\binom{n-2k}{j}a^{n-2k-j}C_{\frac{j}{2}}\frac{1+(-1)^j}{2}.$$

\end{corollary}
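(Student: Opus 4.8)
The plan is to reduce the Corollary directly to the preceding Proposition by matching functional equations. First I would observe that the power series $f(x)$ defined by $f(x)=\frac{1}{1-ax-(b-1)x^2-x^2 f(x)}$ is precisely the series $g_{a,b'}(x)$ of the previous Proposition with the parameter $b$ replaced by $b'=b-1$; that is, $f(x)=g_{a,b-1}(x)$. Since $a_n$ is the $n$-th entry of the first column of $(f(x),x)$, and the first column of $(f(x),x)$ is generated by $f(x)$, we have $a_n=[x^n]f(x)=[x^n]g_{a,b-1}(x)$. The formula to be proved is therefore nothing but the formula of the Proposition evaluated at $b-1$.

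Next I would invoke the Proposition, which supplies a closed form for $[x^n]g_{a,b}(x)$ that holds as an identity in the polynomial ring $\mathbf{Z}[a,b]$: the right-hand side is a finite $\mathbf{Z}$-linear combination of monomials in $a$ and $b$, while the left-hand side is the $n$-th coefficient of a power series whose coefficients are polynomials in $a,b$ (this is immediate by induction from the defining recursion $g=\frac{1}{1-ax-bx^2-x^2 g}$). One may therefore specialize the identity by substituting $b\mapsto b-1$. Doing so turns the factor $\binom{n-k}{k}b^k$ into $\binom{n-k}{k}(b-1)^k$, while leaving the inner sum $\sum_{j=0}^{n-2k}\binom{n-2k}{j}a^{n-2k-j}C_{j/2}\frac{1+(-1)^j}{2}$ unchanged, yielding exactly the claimed formula for $a_n$.

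The only delicate point — and the closest thing to an obstacle — is this justification that the Proposition's identity may be specialized at $b\mapsto b-1$; once it is noted that the relevant coefficients are genuine polynomials in the formal parameters, the substitution is transparent. Alternatively one can sidestep the issue entirely by simply re-running the proof of the Proposition verbatim with $b-1$ in place of $b$: the Riordan-array factorization
$$(g_{a,b-1}(x),x)=\left(\frac{1}{1-(b-1)x^2},\frac{x}{1-(b-1)x^2}\right)\cdot\left(\frac{1}{1-ax},\frac{x}{1-ax}\right)\cdot\left(c(x^2),\frac{g_{a,b-1}(x)}{x}\right)$$
holds word for word, and extracting $[x^n]$ produces the stated expression. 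Either route is routine, so I expect the whole argument to be a short one.
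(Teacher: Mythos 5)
Your proposal is correct and matches the paper's (implicit) derivation: the corollary is simply the Proposition's coefficient formula specialized at $b\mapsto b-1$, since $f$ satisfies the defining equation of $g_{a,b-1}$, so $a_n=[x^n]g_{a,b-1}(x)$. The remarks on polynomiality in $b$ (or re-running the factorization with $b-1$) are a harmless extra justification of a substitution the paper treats as immediate.
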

We note that if we start with any matrix of the form (\ref{M}), and iterate the $(a,b)$-process on it, then the limit matrix is $(f(x),x)$. Thus the element of the Appell subgroup of the Riordan group $(f(x),x)$ where
$$f(x)=\cfrac{1}{1-ax-(b-1)x^2-
\cfrac{x^2}{1-ax-(b-1)x^2-
\cfrac{x^2}{1-\cdots}}},$$ is a ``universal element'' for the $(a,b)$-process.
\section{A Somos-$4$ conjecture}
We have the following Somos-$4$ conjecture.
\begin{conjecture} The Hankel transform of the sequence $a_n$ is a $(a^2, b^2-a^2)$ Somos-$4$ sequence.
\end{conjecture}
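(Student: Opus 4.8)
\ \
Write $h_n=\det(a_{i+j})_{0\le i,j\le n-1}$ for the Hankel determinant, with $h_0=1$, and let me route the whole argument through the Jacobi continued fraction of $f$.

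\emph{Step 1: reduce to the continued-fraction coefficients.}
Since $f$ satisfies $x^2f(x)^2-P(x)f(x)+1=0$ with $P(x)=1-ax-(b-1)x^2$, for generic $a,b$ all relevant Hankel minors are nonzero and $f$ has a Jacobi continued-fraction expansion
\[
f(x)=\cfrac{1}{1-\alpha_0 x-\cfrac{\beta_1 x^2}{1-\alpha_1 x-\cfrac{\beta_2 x^2}{1-\cdots}}},\qquad \alpha_n,\beta_n\in\mathbf{Q}(a,b).
\]
By the classical determinant formula for $J$-fractions, $h_n=\prod_{k=1}^{n-1}\beta_k^{\,n-k}$ for $n\ge1$. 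Putting $r_n=h_{n+1}/h_n$ gives $\beta_n=r_n/r_{n-1}$ and hence $\beta_{n-1}\beta_n\beta_{n+1}=r_{n+1}/r_{n-2}=h_{n+2}h_{n-2}/(h_{n+1}h_{n-1})$; dividing the desired relation by $h_{n+1}h_{n-1}$ and using $\beta_n=h_{n+1}h_{n-1}/h_n^2$ shows that
\[
h_{n+2}h_{n-2}=a^2h_{n+1}h_{n-1}+(b^2-a^2)h_n^2
\]
is \emph{equivalent} to the single nonlinear three-term recurrence
\[
\beta_{n-1}\,\beta_n^{\,2}\,\beta_{n+1}=a^2\beta_n+(b^2-a^2),\qquad n\ge2. \qquad(\star)
\]
So the conjecture is equivalent to proving $(\star)$ for the $\beta_n$ of $f$. (A direct computation fixes the initial data: $h_1=1$, $h_2=b$, $h_3=b^2+a^2(b-1)$, $h_4=b^4+a^4(b-1)$, i.e.\ $\beta_1=b$ and $\beta_2=1+a^2(b-1)/b^2$, and one checks $(\star)$ at $n=2$.)

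\emph{Step 2: the recurrence for the $\beta_n$.}
I would exploit the ``self-replicating'' shape of the continued fraction for $f$. Put $\phi_0=f$ and let $\phi_n$ be the normalized complete quotients, $\phi_{n-1}=1/(1-\alpha_{n-1}x-\beta_n x^2\phi_n)$ with $\phi_n(0)=1$; equivalently $\phi_n=S_n(\phi_{n-1})$ with $S_n(t)=\bigl((1-\alpha_{n-1}x)t-1\bigr)/(\beta_n x^2\,t)$, where $\alpha_{n-1},\beta_n$ are uniquely determined by requiring $\phi_n$ to be a power series with constant term $1$. From $\phi_0=1/(P-x^2\phi_0)$ every $\phi_n$ lies in the quadratic extension $K=\mathbf{Q}(a,b)(x)[\phi_0]$ of $L=\mathbf{Q}(a,b)(x)$; writing $\phi_n=p_n+q_n\phi_0$ with $p_n,q_n\in L$ and using the norm $N_n=(p_n+q_n\phi_0)(p_n+q_n\overline{\phi_0})\in L$, the map $(p_n,q_n)\mapsto(p_{n+1},q_{n+1})$, subject to the normalizations, is a birational (QRT-type) iteration. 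The point to prove is that along its orbit the $\beta_n$ satisfy a recurrence $\beta_{n-1}\beta_{n+1}\beta_n^{\,2}=c_2\beta_n^2+c_1\beta_n+c_0$, and that evaluating the constants against the coefficients of the quartic $Q(x)=P(x)^2-4x^2$ gives $(c_2,c_1,c_0)=(0,a^2,b^2-a^2)$, which is $(\star)$.

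\emph{The main obstacle.} The hard part is proving $(\star)$ for \emph{all} $n$, i.e.\ controlling the QRT orbit globally rather than checking the first few $\beta_n$. The conceptual — and probably cleanest — resolution is the elliptic-curve picture: for generic $(a,b)$ the quartic $Q(x)=(1-ax-(b-1)x^2)^2-4x^2$ defines an elliptic curve $E\colon y^2=Q(x)$, the function $f=(P-\sqrt Q)/2x^2$ encodes an algebraic function on $E$, and — by the theory relating continued-fraction expansions and Hankel determinants on elliptic curves to Somos sequences \cite{Somos} — the $h_n$ agree, up to a multiplicative gauge factor of the form $\lambda\mu^n\nu^{n^2}$, with values $\sigma\bigl((n+c)\,T\bigr)$ of the Weierstrass $\sigma$-function of $E$ at integer multiples of a point $T$ fixed by the branch data at $x=0$; such sequences obey a Somos-$4$ relation by the three-term addition theorem for $\sigma$. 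Because a gauge change by $\lambda\mu^n\nu^{n^2}$ rescales the Somos parameters, one must pin the gauge down exactly (the values $h_1,\dots,h_4$ above do this) and then verify that the parameters come out to $(a^2,b^2-a^2)$, and one must cover the degenerate specializations of $(a,b)$ (e.g.\ $b=1$) where $E$ is singular by a Zariski-density/continuity argument. Lacking that machinery, the elementary fallback is: compute $\beta_1,\dots,\beta_6$ explicitly in $\mathbf{Q}(a,b)$ from the convolution recurrence $a_n=[n=0]+a\,a_{n-1}+(b-1)a_{n-2}+\sum_{i=0}^{n-2}a_i a_{n-2-i}$, verify $(\star)$ for $n=2,3,4$, and then show by induction — using the algebraic relations among the $\phi_n$ inside $K$ — that $(\star)$ at index $n$ forces it at $n+1$, the inductive step being where the elliptic structure quietly does the work.
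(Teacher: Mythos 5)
The statement you are addressing is stated in the paper only as a conjecture: the paper offers no proof at all, just numerical examples, so there is no argument of the author's to compare yours against. Judged on its own terms, your proposal is a sensible research programme but not a proof. Step 1 is correct and standard: assuming all Hankel determinants are nonzero, $f$ has a $J$-fraction with coefficients $\beta_n$, $h_n=\prod_{k=1}^{n-1}\beta_k^{n-k}$, $\beta_n=h_{n+1}h_{n-1}/h_n^2$, and the Somos-$4$ relation for $h_n$ is equivalent to $\beta_{n-1}\beta_n^2\beta_{n+1}=a^2\beta_n+(b^2-a^2)$; your initial values $h_2=b$, $h_3=b^2+a^2(b-1)$, $h_4=b^4+a^4(b-1)$ check out against the paper's examples. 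But this only repackages the conjecture; all of the content now sits in Step 2, and there you never actually prove anything. The claim that the normalized complete quotients $\phi_n$ generate a QRT-type orbit whose $\beta_n$ satisfy a biquadratic invariant relation, the claim that $h_n$ agrees up to a gauge factor $\lambda\mu^n\nu^{n^2}$ with $\sigma$-function values on the curve $y^2=P(x)^2-4x^2$, and the identification of the resulting Somos parameters as exactly $(a^2,b^2-a^2)$ are each asserted, not established; your ``elementary fallback'' is explicit about this, since the inductive step is said to be ``where the elliptic structure quietly does the work'' without that work being exhibited. Checking $(\star)$ for $n=2,3,4$ plus an unproven induction is not a proof for all $n$.

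Two further points need genuine care even within your framework. First, the equivalence of Step 1 presupposes $h_n\neq 0$ for all $n$, which fails for integer specializations the paper itself highlights (for $a=2$, $b=0$ one has $h_2=0$ and the Hankel transform contains zeros), so the conjecture at such points cannot be reached by the $J$-fraction reduction alone; the Zariski-density/continuity argument you gesture at must be formulated for an identity of the product form $h_{n+2}h_{n-2}=a^2h_{n+1}h_{n-1}+(b^2-a^2)h_n^2$ (polynomial in $a,b$ for each fixed $n$), which is doable but has to be said precisely, and one must also decide what the Somos recursion in the paper's quotient form means when $h_{n-4}=0$. Second, the gauge-fixing step is exactly where the specific constants $a^2$ and $b^2-a^2$ enter, and matching four initial determinants does not by itself show that the $\sigma$-quotient sequence and the Hankel sequence satisfy the same translation-invariant quartic relation unless you have already proved both satisfy some Somos-$4$ relation with the same coefficients; as written, that is circular. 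In short: your reduction is fine, your Step 2 is a plausible plan (in the spirit of the continued-fraction-on-elliptic-curve literature the paper cites), but the statement remains unproven both in your proposal and in the paper.
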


By this we mean that the sequence $h_n$ of Hankel determinants
$$h_n=| a_{i+j}|_{0 \le i,j \le n}$$ satisfies an $(\alpha, \beta)$ Somos-$4$ relation
$$ h_n=\frac{\alpha h_{n-1} h_{n-3} + \beta h_{n-2}^2}{h_{n-4}}, \quad n >3,$$ where
$\alpha=a^2$ and $\beta=b^2-a^2$.

Equivalently  the Hankel transform of the sequence with general term
$$\sum_{k=0}^{\lfloor \frac{n}{2} \rfloor}\binom{n-k}{k}b^k \sum_{j=0}^{n-2k}\binom{n-2k}{j}a^{n-2k-j}C_{\frac{j}{2}}\frac{1+(-1)^j}{2}$$ is (conjectured to be) a $(a^2, (b+1)^2-a^2)$ Somos-$4$ sequence.
\begin{example}
We let $a=b=1$. Then $a_n$ is the sequence \seqnum{A128720}
$$1, 1, 3, 6, 16, 40, 109, 297, 836, 2377, 6869\ldots$$ which counts the number of skew Dyck paths of semi-length $n$ with no $UUU$'s. The Hankel transform of this sequence is the $(1,3)$ Somos-$4$ sequence \seqnum{A174168} which begins
$$1, 2, 5, 17, 109, 706, 9529, 149057, 3464585, 141172802, 5987285341,\ldots.$$
\end{example}
\begin{example} We take $a=1$, $b=2$ to get the sequence
\seqnum{A174171} which begins
$$1, 1, 4, 8, 25, 65, 197, 571, 1753, 5351, 16746\ldots,$$ with $(1,8)$ Somos-$4$ Hankel transform
$$1, 3, 11, 83, 1217, 22833, 1249441, 68570323, 11548470571, 2279343327171,\ldots.$$
This is \seqnum{A097495}, or the even-indexed terms of the Somos-$5$ sequence.
\end{example}
\begin{example} We let $a=2$, and $b=-1$. Then $a_n$ is the sequence \seqnum{A187256} which begins
$$1, 2, 4, 10, 28, 82, 248, 770, 2440, 7858, 25644, \ldots.$$
This sequence counts peakless Motzkin paths where the level steps come in two colours (Deutsch). The Hankel transform of this sequence is the Somos-$4$ variant \seqnum{A162547} that begins
$$1, 0, -4, -16, -64, 0, 4096, 65536, 1048576, 0, -1073741824,\ldots.$$
\end{example}

\section{The ``$(a)$-process'' and Narayana numbers}
We now look at the simpler ``$(a)$-process'', whereby we send the matrix

\begin{equation}\label{M1} M=\left(\begin{array}{ccccccc} 1 & 0 &
0
& 0 & 0 & 0 & \ldots \\m_{2,1} & 1 & 0 & 0 & 0 & 0 & \ldots \\ m_{3,1} & m_{3,2}
& 1 & 0 & 0 &
0 & \ldots \\ m_{4,1} & m_{4,2} & m_{4,3} & 1 & 0 & 0 & \ldots \\ m_{5,1} & m_{5,2} & m_{5,3}
& m_{5,4} & 1 & 0 & \ldots \\m_{6,1} & m_{6,2} & m_{6,3} & m_{6,4} & m_{6,5} & 1
&\ldots\\
\vdots &
\vdots & \vdots & \vdots & \vdots & \vdots &
\ddots\end{array}\right)\end{equation} to the matrix

\begin{displaymath}\tilde{M}_a=\left(\begin{array}{ccccccc} 1 & 0 &
0
& 0 & 0 & 0 & \ldots \\-a & 1 & 0 & 0 & 0 & 0 & \ldots \\ -m_{2,1} & -a
& 1 & 0 & 0 &
0 & \ldots \\ -m_{3,1} & -m_{3,2} & -a & 1 & 0 & 0 & \ldots \\ -m_{4,1} & -m_{4,2} & -m_{4,3}
& -a & 1 & 0 & \ldots \\-m_{5,1} & -m_{5,2} & -m_{5,3} & -m_{5,4} & -a & 1
& \ldots\\
\vdots &
\vdots & \vdots & \vdots & \vdots & \vdots &
\ddots\end{array}\right),\end{displaymath} and then take the inverse to obtain
$\tilde{M}_a^{-1}$.
We have the following result.
\begin{proposition} Let $f(x)$ be the power series defined by
$$f(x)=\frac{1}{1-(a-1)x-xf(x)}.$$ Then the Riordan array
$$(f(x),x)$$ is invariant under the $(a)$-process.
\end{proposition}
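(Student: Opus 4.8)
The plan is to argue exactly as in the proof of the invariance proposition for the $(a,b)$-process above; the $(a)$-process is just the simpler one-parameter version (a single $-a$ subdiagonal in place of the $-a$ and $-b$ subdiagonals before the down-shift), so the same computation goes through \emph{mutatis mutandis}. First I would note that the functional equation $f(x)=1/(1-(a-1)x-xf(x))$ forces $a_0:=[x^0]f(x)=1$, so that $(f(x),x)$ is the sequence array of $a_n=[x^n]f(x)$, with $(n,k)$-entry $[k\le n]\,a_{n-k}$.

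Next I would apply the $(a)$-process to $M=(f(x),x)$ and read off $\tilde M_a$ column by column. Since $M$ is itself a sequence array, with $(i,j)$-entry $a_{i-j}$, the prescription of the process produces for $\tilde M_a$ a first column $1,-a,-a_1,-a_2,-a_3,\ldots$, and more generally each column of $\tilde M_a$ is the previous one shifted down by a single row; hence $\tilde M_a$ is again a sequence array, say $\tilde M_a=(p(x),x)$, where
$$p(x)=1-ax-\sum_{i\ge1}a_i x^{i+1}=1-ax-x\bigl(f(x)-1\bigr)=1-(a-1)x-xf(x),$$
the last equality using $a_0=1$.

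The final step is purely algebraic: the definition of $f$ gives at once $1/f(x)=1-(a-1)x-xf(x)=p(x)$, so $\tilde M_a=(1/f(x),x)=(f(x),x)^{-1}$. Inverting, the matrix produced by the $(a)$-process from $(f(x),x)$ is $\tilde M_a^{-1}=(f(x),x)$ itself, which is the claim. I expect the only step needing genuine care is the middle one --- checking that the down-shifting prescribed by the $(a)$-process really does carry the Appell array $(f(x),x)$ to another sequence array, and that its first column has the stated generating function; once that bookkeeping is done, the concluding identity is immediate from the defining equation of $f$. Alternatively, one could obtain invariance from an iteration argument like the one noted for the $(a,b)$-process --- that repeated application of the process sends any lower-triangular matrix with unit diagonal to $(f(x),x)$ --- but the direct matrix computation above is shorter and self-contained.
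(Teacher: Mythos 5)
Your proof is correct and follows essentially the same route as the paper: identify the matrix $\tilde M_a$ produced by the process as the sequence array $(1-(a-1)x-xf(x),x)$ and then conclude from the defining equation that this equals $(1/f(x),x)=(f(x),x)^{-1}$. In fact you spell out the down-shift bookkeeping (that $\tilde M_a$ is again a sequence array with generating function $1-ax-x(f(x)-1)$) more explicitly than the paper, which leaves that identification implicit.
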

\begin{proof} We wish to show that
$$(f(x),x)=(1-(a-1)x-xf(x),x)^{-1},$$ or equivalently that
$$(f(x),x)^{-1}=\left(\frac{1}{f(x)},x\right)=(1-(a-1)x-xf,x).$$
But this follows immediately since by definition
$$f(x)=\frac{1}{1-(a-1)x-xf(x)}.$$
\end{proof}
\noindent We now remark that the continued fraction
$$f(x)=\cfrac{1}{1-(a-1)x-
\cfrac{x}{1-(a-1)x-
\cfrac{x}{1-\cdots}}}$$ is the generating function of the
Narayana polynomials $\mathcal{N}_n(a)=\sum_{k=0}^n N_{n,k}a^k$ \cite{Narayana_Gen, Narayana_Mimo, Sulanke}
where the matrix $(N_{n,k})$ is the matrix of Narayana numbers \seqnum{A090181}
\begin{displaymath}\left(\begin{array}{ccccccc} 1 & 0 & 0 & 0
&
0 & 0 & \ldots \\0 &
1 & 0 & 0 & 0 & 0 & \ldots \\ 0 & 1 & 1 & 0 & 0 & 0 & \ldots
\\
0 & 1 & 3 & 1 & 0 & 0 & \ldots \\ 0 & 1 & 6 & 6 & 1 & 0 &
\ldots \\0 & 1 &
10 & 20 & 10 & 1 &\ldots\\ \vdots & \vdots & \vdots & \vdots &
\vdots & \vdots & \ddots\end{array}\right).\end{displaymath}
Hence the terms of the first column of $(f(x),x)$ are precisely the
Narayana polynomials in $a$:
$$a_n=\mathcal{N}_n(a)=\sum_{k=0}^n N_{n,k}a^k.$$
In particular, for $a=1$, we get
$$a_n=C_n,$$ the Catalan numbers.

As before, we note that if we start from an arbitrary matrix of the form Eq. (\ref{M1}), and iterate the
$(a)$-process, then the limit matrix is $(f(x),x)$. In particular, if $a=1$, the limit matrix is the Catalan numbers sequence array $(C_{n-k})$:
\begin{displaymath}\left(\begin{array}{ccccccc} 1 & 0 &
0
& 0 & 0 & 0 & \ldots \\1 & 1 & 0 & 0 & 0 & 0 & \ldots \\ 2 & 1
& 1 & 0 & 0 &
0 & \ldots \\ 5 & 2 & 1 & 1 & 0 & 0 & \ldots \\ 14 & 5 & 2
& 1 & 1 & 0 & \ldots \\42 & 14 & 5 & 2 & 1 & 1
&\ldots\\
\vdots &
\vdots & \vdots & \vdots & \vdots & \vdots &
\ddots\end{array}\right).\end{displaymath}
\noindent This is the Riordan array $(c(x),x)$.

By solving the equation $$f(x)=\frac{1}{1-(a-1)x-xf(x)}$$ we see that
$$(f(x),x)=\left(\frac{1-(a-1)x-\sqrt{1-2(a+1)x+(a-1)^2 x^2}}{2x},x\right),$$ which by the above is the matrix
with $(n,k)$-th term
$$[k \le n] \mathcal{N}_{n-k}(a).$$
\section{Eigentriangles}
We also have the following result.
\begin{proposition}

Let $M$ be a matrix as in Eq. (\ref{M}). Then $\tilde{M}_1^{-1}$ is an eigentriangle of
$M$.
\end{proposition}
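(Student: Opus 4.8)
The plan is to recast the $(1)$-process as a single matrix operation and then read the eigentriangle relation straight off. Write $S$ for the lower shift matrix --- $1$'s on the first subdiagonal, $0$'s elsewhere --- so that $SX$ is $X$ with every row pushed down by one and a zero row inserted at the top. Comparing the displayed entries of $M$ in Eq.\ (\ref{M}) with those of $\tilde M_a$ entry by entry, one sees at once that
\[ \tilde M_a = I + (1-a)S - SM, \qquad\text{and in particular}\qquad \tilde M_1 = I - SM. \]
Since $M$ is lower triangular with $1$'s on the diagonal, $SM$ is \emph{strictly} lower triangular, so $\tilde M_1 = I - SM$ is itself lower triangular with unit diagonal; hence it is invertible over $\mathbf Z$ and $\tilde M_1^{-1} = \sum_{k\ge 0}(SM)^k$. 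Establishing this bookkeeping identity $\tilde M_1 = I - SM$ is the one step that requires care.

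With $E := \tilde M_1^{-1}$, the relation $\tilde M_1 E = I$, i.e.\ $(I - SM)E = I$, rearranges to $E - SME = I$, that is,
\[ SME = E - I. \]
I would then unwind this row by row. The left side $SME$ is $ME$ with its rows shifted down by one; the right side $E - I$ is $E$ with the $1$'s on its diagonal deleted. Equating corresponding rows gives $(ME)_{n,k} = E_{n+1,k}$ for all $k \le n$, the column-$(n+1)$ entries vanishing automatically on both sides since $ME$ is lower triangular, and the very top row reproducing the fact that $E$ has $1$'s on its diagonal. Thus multiplying $E$ on the left by $M$ reproduces $E$ with its rows shifted up by one --- which is exactly the eigentriangle relation --- and since this recursion, together with the unit-diagonal normalisation, determines the triangle uniquely, $\tilde M_1^{-1}$ is in fact \emph{the} eigentriangle of $M$.

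The substance of the argument is therefore concentrated in the identity $\tilde M_1 = I - SM$: once the $(1)$-process is recognised as $M \mapsto I - SM \mapsto (I - SM)^{-1}$, the eigentriangle property falls out of the geometric-series form of the inverse. The only thing to watch is the off-by-one concealed in $S$ --- in particular that the subdiagonal entry $-a$ of $\tilde M_a$ coincides with the $-1$ produced by $I - SM$ precisely when $a = 1$, which is why it is $\tilde M_1$, rather than $\tilde M_a$ for general $a$, that delivers an eigentriangle.
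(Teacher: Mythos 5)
Your proof is correct and is essentially the paper's argument in matrix-operator dress: the identity $\tilde M_1 = I - SM$ and the rearrangement $SME = E - I$ are exactly the paper's entrywise expansion of $\tilde M_1\,\tilde M_1^{-1}=I$ into the recurrence $r_{k,l}=\sum_j m_{k-1,j}r_{j,l}$, i.e.\ that the $(k-1,l)$ entry of $M\tilde M_1^{-1}$ is $r_{k,l}$. The geometric-series remark and the uniqueness observation are harmless extras not needed for the statement.
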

\noindent By this we mean that if
\begin{equation}\label{M2} \tilde{M}_1^{-1}=\left(\begin{array}{ccccccc} 1 & 0 &
0
& 0 & 0 & 0 & \ldots \\1 & 1 & 0 & 0 & 0 & 0 & \ldots \\ r_{3,1} & 1
& 1 & 0 & 0 &
0 & \ldots \\ r_{4,1} & r_{4,2} & 1 & 1 & 0 & 0 & \ldots \\ r_{5,1} & r_{5,2} & r_{5,3}
& 1 & 1 & 0 & \ldots \\r_{6,1} & r_{6,2} & r_{6,3} & r_{6,4} & 1 & 1
&\ldots\\
\vdots &
\vdots & \vdots & \vdots & \vdots & \vdots &
\ddots\end{array}\right)\end{equation}
then
\begin{displaymath}M \tilde{M}_1^{-1}=\left(\begin{array}{ccccccc} 1 & 0 & 0
& 0 & 0 & 0 & \ldots \\r_{3,1} & 1 & 0 & 0 & 0 & 0 & \ldots \\
r_{4,1} & r_{4,2} & 1 & 0 & 0 & 0 & \ldots \\ r_{5,1} & r_{5,2} & r_{5,3} & 1
& 0 & 0 & \ldots \\ r_{6,1} & r_{6,2} & r_{6,3} & r_{6,4} & 1 & 0 & \ldots
\\r_{7,1} & r_{7,2} & r_{7,3} & r_{7,4} & r_{7,5} & 1 &\ldots\\ \vdots & \vdots & \vdots & \vdots & \vdots
& \vdots & \ddots\end{array}\right).\end{displaymath}
\noindent Note that the first column of $\tilde{M}_1^{-1}$ is then an \emph{eigensequence} of $M$.
\begin{proof} We have $$\tilde{M} \tilde{M}_1^{-1}=I$$ and hence
$$-\sum_{j=1}^{k-1} m_{k-1,j}r_{j,l}+r_{k,l}=0 \quad \text{for} \quad k \ne l.$$
Then for $k \ne l$, we have
$$r_{k,l}=\sum_{j=0}^{k-1} m_{k-1,j}r_{j,l}.$$
Thus the $(k-1,l)$-th element of $M \tilde{M}_1^{-1}$ is $r_{k,l}$.
\end{proof}
\begin{example} The eigentriangle of the binomial matrix $(\binom{n}{k})$ is given by
\begin{displaymath}E=\left(\begin{array}{ccccccc} 1 & 0 & 0 & 0 & 0 & 0 & \ldots \\1 & 1 & 0 & 0 & 0 & 0 & \ldots \\ 2 & 1 & 1 & 0 & 0 & 0 & \ldots \\ 5 & 3 & 1 & 1 & 0 & 0 & \ldots \\ 15 & 9 & 4 & 1 & 1 & 0 & \ldots \\52 & 31 & 14 & 5 & 1 & 1 &\ldots\\ \vdots & \vdots & \vdots & \vdots & \vdots & \vdots & \ddots\end{array}\right),\end{displaymath} where the first column entries are the Bell numbers.
We note in passing that the production matrix \cite{ProdMat} of the matrix $E$ is equal to
\begin{displaymath}\left(\begin{array}{ccccccc} 1 & 1 & 0
& 0 & 0 & 0 & \ldots \\1 & 0 & 1 & 0 & 0 & 0 & \ldots \\
2 & 1 & 0 & 1 & 0 & 0 & \ldots \\ 5 & 3 & 1 & 0
& 1 & 0 & \ldots \\ 15 & 9 & 4 & 1 & 0 & 1 & \ldots
\\52 & 31 & 14 & 5 & 1 & 0 &\ldots\\ \vdots & \vdots & \vdots & \vdots & \vdots
& \vdots & \ddots\end{array}\right).\end{displaymath}
In this case, we have
$$a_n=\sum_{k=0}^{n-1} \binom{n-1}{k}a_k, \quad n>0, \quad a_0=1,$$ or $$a_n= Bell(n),$$ the Bell numbers \seqnum{A000110}.
\end{example}
\begin{example}
The eigentriangle of the skew binomial matrix $(\binom{k}{n-k})$ is given by
\begin{displaymath}E=\left(\begin{array}{ccccccc} 1 & 0 & 0 & 0 & 0 & 0 & \ldots \\1 & 1 & 0 & 0 & 0 & 0 & \ldots \\ 1 & 1 & 1 & 0 & 0 & 0 & \ldots \\ 2 & 2 & 1 & 1 & 0 & 0 & \ldots \\ 4 & 4 & 3 & 1 & 1 & 0 & \ldots \\11 & 11 & 7 & 4 & 1 & 1 &\ldots\\ \vdots & \vdots & \vdots & \vdots & \vdots & \vdots & \ddots\end{array}\right),\end{displaymath}
where the first column
$$1, 1, 1, 2, 4, 11, 33, 114, 438, 1845, 8458,\ldots$$ or \seqnum{A127782} is thus an eigensequence of
$(\binom{k}{n-k})$ (remark by Gary W. Adamson).
We have
$$a_n=\sum_{k=0}^{n-1} \binom{k}{n-k-1}a_k, \quad n>0, \quad a_0=1.$$
\end{example}
\begin{example}
The eigentriangle of the sequence array for the Motzkin numbers $M_n$ (i.e., the matrix with
$(n,k)$-th term $[k \le n] M_{n-k}$ where $M_n=\sum_{k=0}^{\lfloor \frac{n}{2} \rfloor} \binom{n}{2k}C_k$) is the sequence array
for the sequence \seqnum{A005773} of directed animals $A_n$ of size $n$.
Thus
$$A_n=\sum_{k=0}^{n-1} M_{n-k-1}A_k.$$
\end{example}
\noindent We can characterize the eigentriangle $E=(E(n,k))$ corresponding to a matrix $A=(A(n,k))$ as follows.
We define
\begin{equation}\label{Eigen}\tilde{E}(n,j)=\sum_{k=0}^{n-1} A(n-1+j,k+j)\tilde{E}(k,j), \quad \text{with} \quad \tilde{E}(0,j)=1.\end{equation}
Then $$E(n,k)=[k \le n] \tilde{E}(n-k,k).$$
\section{The Takeuchi numbers}
The Takeuchi numbers $t_n$ \seqnum{A000651} are an example of a sequence that can be defined with the aid of the eigentriangle of the Catalan triangle $(c(x),xc(x))$ \seqnum{A033184}. We let $T(x)$ be the generating function of the Takeuchi numbers.
Our point of departure is $(4)$ in \cite{Prellberg}:
$$T(x)=\frac{c(x)-1}{1-x}+\frac{x(2-c(x))}{\sqrt{1-4x}}T(xc(x)).$$
We now note that
$$\frac{(2-c(x))}{\sqrt{1-4x}}=c(x),$$ so that \cite{Prellberg}$(4)$ becomes
$$T(x)=\frac{c(x)-1}{1-x}+xc(x)T(xc(x)).$$ In terms of Riordan arrays, we may write this as
$$((1,x)-(xc(x),xc(x))).T(x)=\frac{c(x)-1}{1-x}.$$
Now while the matrix  $$(1,x)-(xc(x),xc(x))$$ is not a Riordan array, it is a special type of
invertible matrix. The theory of eigentriangles tells us that its inverse is the eigentriangle of the
Catalan matrix $$(c(x),xc(x)).$$ This eigentriangle begins
\begin{displaymath}\mathbf{E}=\left(\begin{array}{ccccccc}
1 & 0 & 0 & 0 & 0 & 0 & \ldots \\1 &
1 & 0 & 0 & 0 & 0 & \ldots \\ 2 & 1 & 1 & 0 & 0 & 0 & \ldots
\\
6 & 3 & 1 & 1 & 0 & 0 & \ldots \\ 22 & 11 & 4 & 1 & 1 & 0 &
\ldots \\92 & 46 &
17 & 5 & 1 & 1 &\ldots\\ \vdots & \vdots & \vdots & \vdots &
\vdots & \vdots & \ddots\end{array}\right).\end{displaymath}
We then have
\begin{displaymath}\begin{split}
\left(\begin{array}{ccccccc}
1 & 0 & 0 & 0 & 0 & 0 & \ldots \\1 &
1 & 0 & 0 & 0 & 0 & \ldots \\ 2 & 2 & 1 & 0 & 0 & 0 & \ldots
\\
5 & 5 & 3 & 1 & 0 & 0 & \ldots \\ 14 & 14 & 9 & 4 & 1 & 0 &
\ldots \\42 & 42 &
28 & 14 & 5 & 1 &\ldots\\ \vdots & \vdots & \vdots & \vdots &
\vdots & \vdots & \ddots\end{array}\right)\left(\begin{array}{ccccccc}
1 & 0 & 0 & 0 & 0 & 0 & \ldots \\1 &
1 & 0 & 0 & 0 & 0 & \ldots \\ 2 & 1 & 1 & 0 & 0 & 0 & \ldots
\\
6 & 3 & 1 & 1 & 0 & 0 & \ldots \\ 22 & 11 & 4 & 1 & 1 & 0 &
\ldots \\92 & 46 &
17 & 5 & 1 & 1 &\ldots\\ \vdots & \vdots & \vdots & \vdots &
\vdots & \vdots & \ddots\end{array}\right)=\\\left(\begin{array}{ccccccc}
1 & 0 & 0 & 0 & 0 & 0 & \ldots \\2 &
1 & 0 & 0 & 0 & 0 & \ldots \\ 6 & 3 & 1 & 0 & 0 & 0 & \ldots
\\
22 & 11 & 4 & 1 & 0 & 0 & \ldots \\ 92 & 46 & 17 & 5 & 1 & 0 &
\ldots \\426 & 213 &
79 & 24 & 6 & 1 &\ldots\\ \vdots & \vdots & \vdots & \vdots &
\vdots & \vdots & \ddots\end{array}\right).\end{split}\end{displaymath}
The sequence with g.f. $\frac{c(x)-1}{1-x}$ is the sequence \seqnum{A014138} with general term
$$\sum_{k=0}^{n-1}C_{k+1},$$ and thus the Takeuchi numbers are the image of this sequence by $\mathbf{E}$.
Now in this case $A$ of Eq. (\ref{Eigen}) is the matrix $(c(x),xc(x))$ with $(n,k)$-th term
$$A(n,k)=\binom{2n-k}{n-k}\frac{k+1}{n+1}.$$
Thus we get
$$\tilde{E}(n,j)=\sum_{k=0}^{n-1} \binom{2(n-1)+j-k}{n-1-k}\frac{k+j+1}{n+j}\tilde{E}(k,j), \quad \text{with} \quad \tilde{E}(0,j)=1,$$
and so $$t_n= \sum_{k=0}^n \tilde{E}(n-k,k)\sum_{j=0}^{k-1} C_{j+1}.$$

\noindent We note that the first column of $\mathbf{E}$ is essentially \seqnum{A091768}.
\section{Acknowledgements}
There are many examples of eigensequences in \cite{SL1}, many of which are contributed by Paul D. Hanna or Gary W. Adamson.
One can find a different but related notion of eigentriangle therein (see \seqnum{A144218}, for example).
An alternative iterative construction of eigensequences is given, for instance, in \seqnum{A168259}.
The ``$(1)$-process'' and the $(1,1)$-process are looked at  in The Mobius function Blog of Mats Granvik \cite{Granvik}. Examples of eigentriangles as defined here are \seqnum{A172380}, \seqnum{A181644},\seqnum{A181651}, \seqnum{A181654}, \seqnum{A186020}, \seqnum{A186023}, \seqnum{A172380}.

\bigskip
\hrule
\bigskip
\noindent 2010 {\it Mathematics Subject Classification}: Primary
15B36; Secondary 11B37, 11B83, 11C20, 15B05

\noindent \emph{Keywords:} Riordan array, eigentriangle, eigensequence, Narayana numbers, Catalan numbers, Somos sequence, Hankel transform, Takeuchi number.

\bigskip
\hrule
\bigskip
\noindent Concerns sequences
\seqnum{A000108},
\seqnum{A000110},
\seqnum{A000651},
\seqnum{A014138},
\seqnum{A033184},
\seqnum{A090181},
\seqnum{A091768},
\seqnum{A097495},
\seqnum{A127782},
\seqnum{A128720},
\seqnum{A144218},
\seqnum{A162547},
\seqnum{A168259},
\seqnum{A172380},
\seqnum{A174168},
\seqnum{A174171},
\seqnum{A181644},
\seqnum{A181651},
\seqnum{A181654},
\seqnum{A186020},
\seqnum{A186023},
\seqnum{A187256}

\end{document}